\def\spc{\color[rgb]{1,0.2,0.2}}
\def\nc{\normalcolor}
\newtheorem {corollary}{Corollary}
\newtheorem {theorem}{Theorem}
\newtheorem{remark}{Remark}
\newtheorem{example}{Example}
\newcommand{\R}{{\mathbb{R}}}
\renewcommand{\P}{\mathsf{P}}
\newcommand{\eps}{\varepsilon}
\newcommand{\E}{\mathsf{E}}
\newcommand{\K}{\mathcal{K}}
\newcommand{\bi}{{\bf 1}}
\renewcommand{\L}{\mathcal{L}}
\newcommand\Tau{\mathcal{T}}% Caligraphic T 
\begin{document}

\date{}

\title{Discrete SIR model on a homogeneous tree and its continuous limit}

\author{
Alexander Gairat\footnote{
Numerix.  Email address: agairat@gearquant.com
}\, and 
Vadim Shcherbakov\footnote{
 Royal Holloway,  University of London.
 Email address: vadim.shcherbakov@rhul.ac.uk
}
}

\maketitle

\begin{abstract}
{\small

We study 
 a discrete Susceptible-Infected-Recovered (SIR) model for the spread of infectious disease on a homogeneous tree and  the limit behavior of the model in the case when the tree vertex degree 
tends to infinity. 
We obtain the distribution of the time it takes for a susceptible vertex to get infected 
in terms of a solution of a non-linear integral equation
under broad assumptions on the model parameters.
Namely, infection rates are assumed to be time-dependent, and recovery times are  given by random variables
with a fairly arbitrary distribution.
We then study the behavior of the model in the limit when the tree vertex degree
 tends to infinity, and  infection rates are appropriately scaled.
We show that in this limit the integral equation of the discrete model
implies an equation for the susceptible population compartment. 
This is a master equation in the sense that both the infectious and the recovered compartments 
can be explicitly expressed in terms of its solution.
% In other words, the master equation implies a continuous SIR model
% for the joint time evolution of all three population compartments.
}
\end{abstract}

\noindent {{\bf Keywords:} SIR model; homogeneous tree; 
  Bernoulli equation; non-linear equation; memory effects; fractional SIR models }

\section{Introduction}
\label{intro}
 
Studying the spread of infectious disease has been of great interest for a long time and 
has motivated a lot of mathematical models.
Susceptible-Infected-Recovered (SIR) type  models are among those that are 
 commonly studied.
A SIR model is a  compartmental model, in which
 a population of individuals is divided into three distinct groups (compartments).
The first compartment consists of individuals that are  susceptible to the disease, but are not yet infected.
The second compartment is represented by infected individuals. 
Finally, the remaining third compartment is a group of individuals, who have been infected and recovered 
from the disease. 

In this paper we revisit a  discrete stochastic SIR model 
and study  its continuous limit (to be explained).
In the  discrete setting a population is modeled by vertices of a graph, and 
infection is transmitted from infected vertices to susceptible ones via edges of the graph.
Such a  model  is  usually  studied under additional  assumptions on
 the graph, infection rates and recovery times
(e.g., see~\cite{Andersson}, \cite{BrittonLN}, 
\cite{Fabricius}, \cite{Montagnon}, \cite{moreno}, \cite{Schutz}, \cite{zhang}, and 
references therein).
For example, if  the underlying graph is complete,
 infection rates are constant and the recovery  times are exponentially 
distributed,  then the model is a discrete version of
 the classic SIR model of A.G. McKendrick and W.O. Kermack (\cite{SIR-paper}). 

We study the discrete SIR model on a homogeneous tree.
  The latter is an infinite connected constant 
vertex degree graph without cycles. The constant vertex degree means that 
 each vertex has the same number of adjacent vertices (neighbors). 
A homogeneous tree can serve as a mathematical model for 
an infinite closed homogeneous population, in which  
all individuals have the same number of social contacts. 
To the best of our knowledge the SIR model on a homogeneous tree has never been 
considered, despite many years of study of the SIR model  on graphs (networks).

The discrete model is studied under broad assumptions 
on both infection rates and recovery times. 
In particular, we assume that  an infected vertex emits germs 
according to a Poisson process with a time-dependent rate.
A susceptible vertex can  be also  infected by itself (according to another Poisson process),
which can be interpreted as a source of infection outside of the population.
Infection rates are assumed to be  time-dependent deterministic functions.
An infected vertex recovers in a  period of time 
given by a random variable. Recovery times are assumed to be independent 
identically distributed random variables with a fairly arbitrary common distribution.

 Our main result for the discrete model is concerned with 
 the distribution of the time it takes for a susceptible 
vertex  to get infected (the time to infection).
We obtain a simple analytical expression for this distribution 
in terms of a  solution of a non-linear integral equation. 
In some special cases this integral equation is  equivalent  to a differential equation
of Bernoulli type (depending on a particular case). In one of these cases 
the  corresponding differential equation can be solved analytically. 

The structure of a homogeneous  tree plays an essential role 
in our analysis.  The key observation is that a susceptible vertex 
 splits  the homogeneous  tree into  a finite number of  
 identical subgraphs, and infection processes on these  
subgraphs are independent and identically distributed.

We then study the discrete model in the limit, as   
 the  vertex degree of the tree  tends to infinity, and the infection rate decreases
proportionally. We show that in this limit our results for  the discrete model 
imply an equation for the susceptible compartment. 
We call it the master equation, because both the  
infectious and the recovered compartments can be explicitly expressed 
in terms of its  solution. 
This  generalizes the results in
 (\cite{Harko}, \cite{Kendall} and~\cite{Kroger}), where 
the system of equations of  the classic  Kermack-McKendrick SIR model was reduced 
to a single equation. 

The obtained continuous SIR model is fairly general and provides a flexible 
technical framework for modeling various  infectious and recovery dynamics.
In fact, the master equation implies a family of continuous SIR models.
A particular SIR model depends on the structure and interpretation of the model parameters.
For example, the Kermack-McKendrick SIR model is 
a special case of our model. Another special case of the model 
coincides with the SIR model  proposed in~\cite{DellAnna}.

The rest of the paper  is organized as follows.
In Section~\ref{disc-model} we consider the discrete SIR model.
The model is formally defined in Section~\ref{disc-def}.
The main result for the discrete model 
is stated and proved 
in Section~\ref{disc-main}.
In Section~\ref{master-sec} we use this result
to derive the master 
equation for the susceptible compartment in the limit, 
 as the tree  vertex degree tends to infinity.
We discuss  the continuous SIR model implied by the master equation 
 in Section~\ref{special-continuous} and 
briefly comment of the relationship of our  model with fractional 
SIR models in Section~\ref{fractional}.
Finally, in  Section~\ref{examples-SIR} 
we consider some special cases in which 
the  integral equation of the discrete model can be written in the 
differential form.

\section{The discrete SIR model}
\label{disc-model}

\subsection{The model definition}
\label{disc-def}

We start with defining  the discrete  continuous time SIR  model on  a general graph (since a particular 
structure of the graph is not important at the definition).
Let  $\Tau$ be a connected (and possibly infinite) graph. 
With some abuse of notation,  we will associate a graph   with the set of its vertices. 
Given vertices  $x, y\in \Tau$ we write $x\sim y$, if these vertices   are connected 
by an edge, in which case we call them neighbors. 
Given a vertex its vertex degree is defined as  the number of its neighbors.
A vertex can be either susceptible, or infected, or recovered (and immune). 
At time $t=0$ each vertex is either susceptible, or 
infected (in which case we assume that it gets infected at time $t=0$).
When a vertex becomes infected, it starts emitting  infectious germs 
 towards all of its neighbors and continues to do so until the moment of its  recovery from the disease.
When a susceptible vertex gets a germ, it becomes infected, and all subsequently arrived germs do not give 
any additional effect. 
An infected vertex emits germs  to a given neighbor according to a 
Poisson process with a time dependent rate.
Namely,  if a  vertex $y\in \Tau$ becomes infected at time $t_y$ and recovers in a time given in the general case  by a random variable $H_y$,
then, at time $t\in [t_y, t_y+H_y]$ it 
infects a susceptible neighbor with the rate $\eps_{t-t_y}$,
where $(\eps_t,\, t\geq 0)$ is  a non-negative  deterministic  function.
A susceptible vertex  can be also   infected by itself, according to a Poisson 
process with the time-dependent  rate $\lambda_{t}$, where 
$(\lambda_{t},\, t\geq 0)$ is  a non-negative deterministic  function. 
Thus, given $t_y,\, H_y=h_y$ for $y\sim x$, a susceptible  vertex $x$ is infected at time $t$ 
with the following  total infection rate 
\begin{equation}
\label{r-ran0}
\lambda_{t} + \sum_{y: y\sim x}\eps_{t-t_y}\cdot
\bi_{\{t_y\leq t\leq t_y+h_y\}}.
\end{equation}
We  assume 
that all  random variables  are realized on a certain probability space $(\Omega, {\cal F}, \P)$,
 the expectation with respect to the probability $\P$ is denoted by $\E$;
all Poisson processes  are independent of each other, and they are also independent of
the recovery times.

\begin{figure}%[t!]
\centering
\includegraphics[scale=0.32]{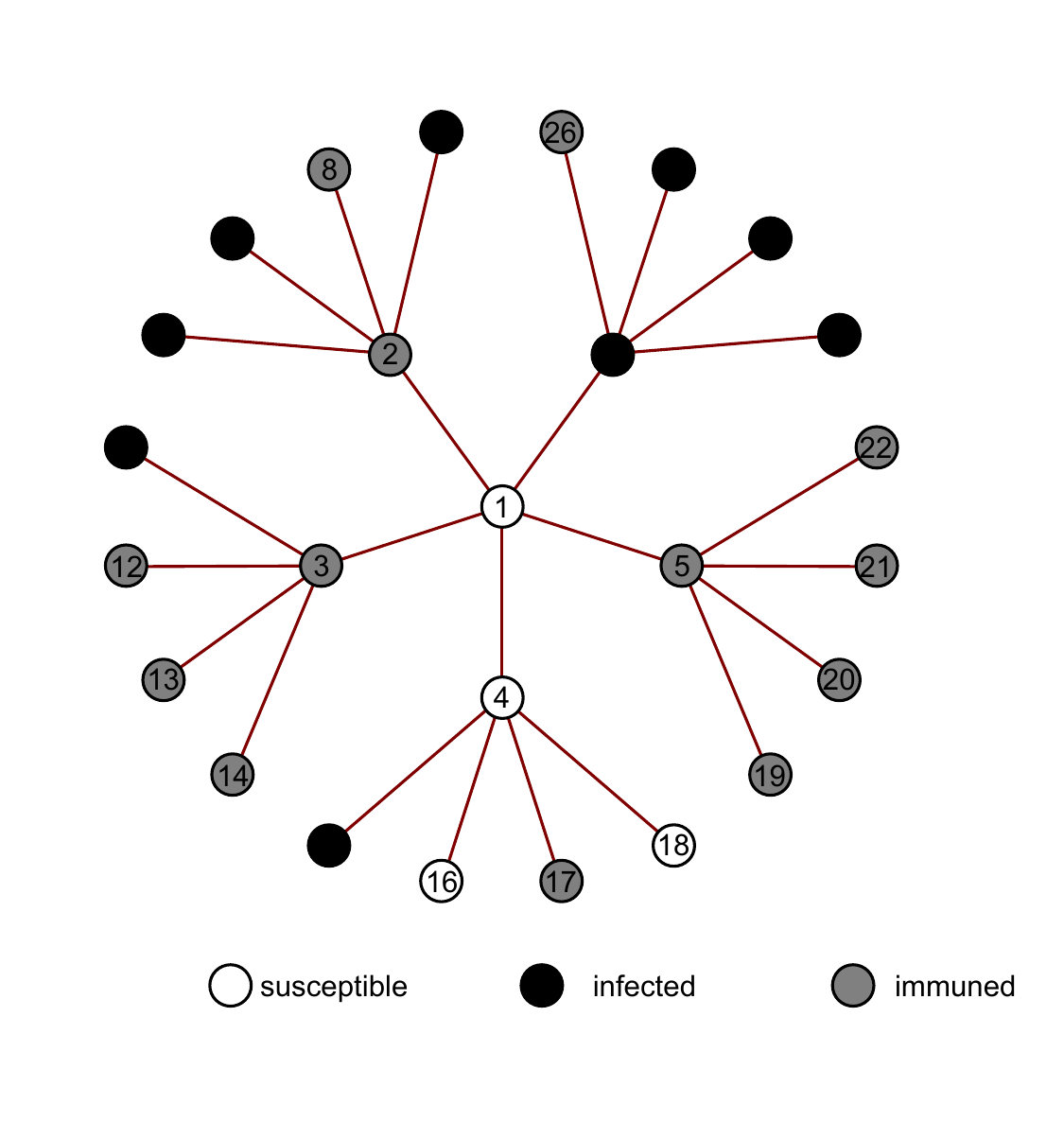}
\caption{\small A finite fragment of  a homogeneous tree, where $n=4$ (i.e. each vertex has $5$ neighbors).
}
\label{tree}
\end{figure}

Both infection rates and recovery times can be fairly arbitrary. However, 
some minimal technical assumptions are required.
These assumptions are as follows.
\begin{enumerate}
\item[(\bf A1)]  Functions $(\eps_t,\, t\geq 0)$ and  $(\lambda_t,\, t\geq 0)$ are bounded measurable 
functions.
\item[(\bf A2)] Recovery times   $\{H_y,\, y\in \Tau\}$  are given  by  independent identically distributed 
random variables. The  common distribution of recovery times is either 
absolutely continuous, or discrete, or a 
mixture of these two types of distributions.
A special case is when the recovery time is given by a deterministic constant
$H$ (including the limit case  $H=\infty$).
\end{enumerate}

\begin{remark}
{\rm 
If $\eps_t\equiv const$,  the recovery time is exponentially distributed,
the underlying graph is complete (i.e. any two vertices are neighbors)
and $\lambda_{t}\equiv 0$, 
then the corresponding discrete SIR model is a discrete version   of the 
classic Kermack-McKendrick  model. 
}
\end{remark}

\begin{remark}
{\rm 
Note that the case when   the recovery time  is given by a deterministic constant $H$
can be  modeled by assuming  that 
 $\eps_t=0$ for $t\geq H$.
 Setting  formally  $H=\infty$ gives the model, in which 
 an infected individual never recovers and stays infectious forever. This is not entirely realistic.
However, by considering a function $\eps$, which decays to zero sufficiently fast, 
 one can model a situation, when  contagiousness of  a chronically  infected individual practically vanishes
in a finite  time.
}
\end{remark}

\subsection{Distribution of the time to infection}
\label{disc-main}

In this section we state and prove the main result (Theorem~\ref{T1} below)
for the discrete SIR model on a homogeneous tree.

Let $\varphi_t$ be the probability that an infected at time $0$ vertex 
does not emit a germ towards a given neighbor until time 
 $t$, 
 and let $f_t$ be the probability that an initially susceptible vertex 
is not self-infected  until  time $t$.  Then
\begin{equation}
\label{phi-ran-f}
\varphi_t=
\begin{cases}
1,& t<0,\\
\E\left(e^{-\int_0^{t\wedge H}\eps_udu}\right), & t\geq 0,
\end{cases}
\quad\text{and}\quad
f_t=
\begin{cases}
1,&  t<0,\\
e^{-\int _0^t\lambda_u du},& t\geq 0,
\end{cases}
\end{equation}
where, for technical convenience, we define both probabilities by unity for $t<0$, and 
 $H$ is a random variable that has the same distribution as the recovery times.

\begin{theorem}
\label{T1}
Let $\Tau$  be a homogeneous tree with the vertex  degree $n+1$, where $n\geq 1$.
Assume that at time $t=0$  a  vertex  $x\in \Tau$ is infected   with probability $p$
and is susceptible with probability $1-p$ independently of other vertices.
Let $\tau$ be the time it takes for a susceptible vertex to get infected. 
Then  
\begin{equation}
\label{eqt1}
\P\left(\tau>t\right)=(1-p)f_{t}[s_{t}]^{n+1}\quad\text{for}\quad t\geq 0, 
\end{equation}
where the function $s_{t}$ 
satisfies the integral equation
\begin{equation}
\label{int_eq}
s_{t}=\varphi_{t}-(1-p)\int _0^{t} f_us^n_u\varphi'_{t-u}du, 
\end{equation}
where  functions $\varphi$ and $f$ are defined in~\eqref{phi-ran-f}
and $\varphi'$ is  the derivative of  $\varphi$. 
\end{theorem}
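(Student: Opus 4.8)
The plan is to exploit the self-similar structure of the homogeneous tree around a fixed vertex $x$, as announced in the introduction. Fix a reference vertex $x$ and observe that deleting $x$ from the tree $\Tau$ of degree $n+1$ splits it into $n+1$ disjoint rooted subtrees, each of which is a rooted homogeneous tree whose root has $n$ children and whose every other vertex has degree $n+1$. The infection processes on these $n+1$ subtrees, together with the spontaneous infection clock of $x$ and the emission clocks along the edges incident to $x$, are mutually independent by the independence assumptions on the Poisson processes and recovery times. Thus, conditioning on $x$ being susceptible at $t=0$ (probability $1-p$), the event $\{\tau>t\}$ factorises: $x$ must avoid self-infection up to $t$ (probability $f_t$) and must avoid infection through each of its $n+1$ incident edges up to $t$; the latter $n+1$ events are i.i.d., giving a common factor $[s_t]^{n+1}$. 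This yields~\eqref{eqt1} once $s_t$ is identified as the probability that $x$ is not infected up to time $t$ through one particular incident edge, i.e. by the infection process living on one pendant rooted subtree.

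Next I would derive the integral equation~\eqref{int_eq} for $s_t$ by a one-step recursion on such a rooted subtree. Let $y$ be the root of one pendant subtree (the neighbour of $x$ along the edge in question). There are two cases for how $x$ can get infected "through $y$''. Either $y$ itself is initially infected (probability $p$), in which case, conditionally, $x$ avoids infection from $y$ up to time $t$ with probability $\varphi_t$; or $y$ is initially susceptible (probability $1-p$), in which case $x$ is never infected directly by $y$ at time $0$, but $y$ may later be infected through one of its own $n$ pendant subtrees (again i.i.d. copies of the same structure, each contributing a factor $s$ evaluated at the relevant time), and then, after $y$'s infection time $u$, it may pass the infection to $x$ in the remaining interval. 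Decomposing on the infection time $u$ of $y$ (whose law has "survival function'' built from $f$ and $s^n$ on the subtree rooted at $y$, since $y$ is also shielded by its own self-clock and $n$ children-subtrees) and integrating $\varphi'_{t-u}$ — the density of the additional delay contributed by the $y\to x$ emission channel — produces precisely the convolution term $-(1-p)\int_0^t f(u)s^n(u)\varphi'_{t-u}\,du$. The boundary terms and the sign work out because $\varphi_{t-u}$ decreases from $1$; integrating by parts or differentiating the survival identity makes the $\varphi'$ appear.

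The main obstacle I expect is being scrupulous about the recursive bookkeeping: one must check that the "subtree rooted at $y$ minus the edge to $x$'' is genuinely an i.i.d. copy of the pendant-subtree structure (so that the same function $s$ reappears, now with exponent $n$ rather than $n+1$), and that the relevant conditional independence — between the time $y$ gets infected within its own subtree and the subsequent transmission along the $y$–$x$ edge — is valid. This is where the tree's absence of cycles is essential: no infection can reach $x$ except through exactly one of the incident edges, and no feedback between the subtrees is possible. I would make this precise by setting up the recursion on rooted trees, writing $s_t$ as the "no-infection-through-the-root-edge'' probability, and verifying that the first-step decomposition closes. Once the recursion is written correctly, extracting~\eqref{int_eq} is a routine computation using Proposition~\ref{prop1} to identify $\varphi$ and $f$, and conditioning on the recovery time $H$ of $y$ only enters through the already-averaged function $\varphi$.
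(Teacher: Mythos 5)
Your proposal follows essentially the same route as the paper's proof: the factorisation $\P(\tau>t)=(1-p)f_t[s_t]^{n+1}$ over the $n+1$ independent pendant rooted subtrees, the identification of the root's time-to-infection survival function as $(1-p)f_ts_t^n$ by self-similarity, the law of total probability over the root's initial state giving $s_t=p\varphi_t+(1-p)\int\varphi_{t-u}\,d\nu(u)$, and integration by parts to produce the $\varphi'$ convolution term. The plan is correct and matches the paper's argument in all essentials.
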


\begin{proof}[Proof of Theorem \ref{T1}]

Consider a {\it susceptible} vertex $x\in\Tau$ and define the probability
\begin{equation}
\label{s-def0}
s_{t}=\P(x\text{ not infected by a given neighbor before time } t), 
\end{equation}
which does not depend on a neighbor due to homogeneity of both the tree 
and the initial condition.
Recall that infectious neighbors infect  the vertex $x$ independently of each other, 
and there is an independent  chance of self-infection.
Therefore,  we have that
\begin{equation}
\label{eqt0}
\begin{split}
&\P(\text{susceptible } x\text{ not infected neither by its neighbors, nor by itself before time } t)\\
&=
f_{t}s_{t}^{n+1}\quad\text{for}\quad t\geq 0,
\end{split}
\end{equation}
and, hence, 
\begin{equation}
\label{eqt11}
\P(\tau>t)=(1-p)f_{t}s_{t}^{n+1}\quad\text{for}\quad t\geq 0,
\end{equation}
where the factor $f_{t}$ is defined in~\eqref{phi-ran-f}.

Further,  recall that  a rooted homogeneous tree with the vertex degree $N\geq 2$
as  a tree, where one of the vertices, called the root,
has $N-1$ neighbors, while any other vertex has $N$  neighbors.
Then,  observe that 
removing the vertex $x$ and all edges connecting $x$ to its neighbors generates 
$n+1$ subgraphs given by rooted trees  with  roots $y_1,...,y_{n+1}$,
that are neighbors of $x$.
Further, given  a neighbor    $y\sim x$ consider an auxiliary SIR model 
on the rooted tree $\Tau_y$  with the root $y$. 
Assume that   the auxiliary  SIR model is specified by the same parameters as the original SIR model on 
the tree $\Tau$.  In particular, 
we assume  that at time $t=0$  any vertex in the auxiliary model is either infected with probability $p$ or susceptible with probability $1-p$,   independently of other vertices. 
Let $\widetilde\tau$ be  the time to infection of the  root vertex  $y$  in the auxiliary SIR model
on the graph $\Tau_y$. Similarly to equations~\eqref{eqt0}-\eqref{eqt11} in the original model, we have
that
\begin{equation}
\label{eqt111}
\P(\widetilde\tau>t)=(1-p)f_{t}s^n_t\quad\text{for}\quad t\geq 0.
\end{equation}
Further, observe that if the root $y$ is not infected, then, due to the  similarity of the rooted trees, 
 the time before infection of any of its susceptible  neighbor has the {\it same} distribution
as the infection time $\widetilde\tau$.
Combining this fact with  the law of  total probability with~\eqref{eqt111} gives 
the following
equation for the probability $s_t$
 \begin{equation}
\label{s-def}
\begin{split}
s_{t}&=p\varphi_{t}-(1-p)\int _0^{t}\varphi_{t-u}(f_us_u^n)'_udu+(1-p)f_ts_t^n
\quad\text{for}\quad t\geq 0,
\end{split}
\end{equation}
where the function $\varphi$ is defined in~\eqref{phi-ran-f}.
Integrating by parts gives  the  equation
\begin{equation}
\label{e13}
\begin{split}
s_{t}&
%=-\int _0^{\infty }\varphi(t-u)\left(f(u)s^n(u)\right)'du%\\&
=\varphi_{t}-(1-p)
\int _0^{t}f_us^n_u
\varphi'_{t-u}du,
\end{split}
\end{equation}
which is  the integral equation~\eqref{int_eq}, as claimed.
\end{proof}

\begin{remark}
\label{Rem1}
{\rm
Consider the SIR model on a homogeneous tree, in which 
 recovery times are given  by i.i.d. random variables
 (including the degenerate case of deterministic recovery time).
Recall the function  $(\varphi_{t},\, t\geq 0)$ defined in~\eqref{phi-ran-f} and 
let
\begin{equation}
\label{tilde-eps}
\tilde{\eps}_t=-\frac{d}{dt}\left(\log(\varphi_{t})\right)\quad\text{for}\quad t\geq 0.
\end{equation}
It is easy to see that, as long as one is interested in the distribution of the time to infection, 
they can consider an equivalent model  with just  susceptible and infected  compartments,
in which  the recovery mechanism is somehow  embedded into the new infection rate 
given by  the  function $\tilde{\eps}_t=(\tilde{\eps}_t,\, t\geq 0)$.
For example, consider a SIR model with the infection rate $\eps_t=(\eps_t,\, t\geq 0)$ such that 
$\eps_t>0$ for all $t\geq 0$, and  the deterministic recovery time 
given by a constant $H>0$, then  
\begin{equation}
\label{tilde-eps-1}
\tilde{\eps}_t=\begin{cases}
\eps_t,&\text{ for } t\leq H,\\
0,&\text{ for } t>H.
\end{cases}
\end{equation}
Trivially,  if $H=\infty$, then $\tilde{\eps}_t=\eps_t$ for all $t\geq 0$. 
However, the function $\tilde{\eps}_t$ can differ significantly from the 
original function $\eps_t$ in the case of the random recovery time 
(e.g., see Corollary~\ref{C3} in Section~\ref{examples-SIR}).
}
\end{remark}

\begin{remark}
{\rm 
In some  special cases, the obtained  integral equation 
  is equivalent to the Bernoulli type differential equation, which can be solved analytically
(see Section~\ref{examples-SIR} for examples). 
}
\end{remark}

\section{Continuous limit of the discrete model}
\label{cont}

\subsection{The master  equation}
\label{master-sec}

In this section we analyze  integral equation~\eqref{int_eq} in the limit, as 
the tree vertex degree goes to infinity. Specifically, we show that  in this limit 
the integral equation implies an equation for the susceptible population proportion.

\begin{theorem}
\label{T2}
Consider the discrete SIR model on the homogeneous tree $\Tau$ 
with  the vertex degree $n+1$.  Suppose that  an infected vertex infects 
a susceptible neighbor with the rate $\frac{1}{n+1}\eps_t$ after time $t$ of being infected, where 
$(\eps_t,\, t\geq 0)$ is a non-negative function. In addition, suppose that the 
other model parameters (i.e. the recovery times and the rate of self-infection) do not depend on $n$.
Let $S_{t,n}$ be the expected susceptible population in this SIR model. 
Let  $(S_{t},\, t\geq 0)$ be a limit point 
 of the sequence of functions $(S_{t,n},\, t\geq 0)$,\, $n\geq 1$, in the sense of the pointwise 
convergence. Then the function $(S_{t},\, t\geq 0)$ must satisfy  the following equation
\begin{equation}
\label{master0}
\log\left(\frac{S_{t}}{S_0}\right)=-\int_0^t\lambda_udu-\int_0^t(1-S_u)\gamma_{t-u}du,
\end{equation}
where
\begin{equation}
\label{gamma}
\gamma_t:=\eps_t\P(H>t) \quad\text{for}\quad t\geq 0
\end{equation}
and $S_0=1-p$ (i.e. it is the probability for a vertex to be susceptible at time $t=0$ in the discrete model).
\end{theorem}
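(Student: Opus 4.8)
The plan is to feed the rescaled model into Theorem~\ref{T1}, rewrite its conclusion as an exact identity for $\log(S_{t,n}/S_{0,n})$, and then pass to the limit along a subsequence $(n_k)$ with $S_{\cdot,n_k}\to S_\cdot$ pointwise, estimating each term by dominated convergence. For the model with neighbour infection rate $\tfrac{1}{n+1}\eps_t$, Proposition~\ref{prop1} gives $\varphi_{t,n}=\E\big(e^{-\frac{1}{n+1}\int_0^{t\wedge H}\eps_u\,du}\big)$, with $f$ unchanged, and Theorem~\ref{T1} gives $S_{t,n}=\P(\tau>t)=(1-p)f_t[s_{t,n}]^{n+1}$, where $s_{t,n}=\varphi_{t,n}-(1-p)\int_0^t f(u)\,s_{u,n}^n\,\varphi'_{t-u,n}\,du$. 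Since $S_{0,n}=1-p$ for every $n$ (hence $S_0=1-p$), taking logarithms produces the exact identity
\[
\log\frac{S_{t,n}}{S_{0,n}}=\log f_t+(n+1)\log s_{t,n}=-\int_0^t\lambda_u\,du+(n+1)\log s_{t,n},
\]
so it remains only to show $(n+1)\log s_{t,n_k}\to-\int_0^t(1-S_u)\gamma_{t-u}\,du$.

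Next I would record two a priori bounds. Since $\varphi'\le 0$, the integral term in the equation for $s_{t,n}$ is non-negative, hence $\varphi_{t,n}\le s_{t,n}\le 1$; and $1-e^{-x}\le x$ gives $0\le 1-\varphi_{t,n}\le\tfrac{1}{n+1}\E\int_0^{t\wedge H}\eps_u\,du$. Thus $1-s_{t,n}=O(1/n)$ uniformly on compact time intervals, so $(n+1)\log s_{t,n}=-(n+1)(1-s_{t,n})+O\big((n+1)(1-s_{t,n})^2\big)=-(n+1)(1-s_{t,n})+o(1)$, and it suffices to identify $\lim_k g_{t,n_k}$ for $g_{t,n}:=(n+1)(1-s_{t,n})$. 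From the integral equation,
\[
g_{t,n}=(n+1)(1-\varphi_{t,n})-(1-p)\int_0^t f(u)\,s_{u,n}^n\,(n+1)\,|\varphi'_{t-u,n}|\,du.
\]
Differentiating under the expectation, $(n+1)|\varphi'_{s,n}|=\E\big(\eps_s\bi_{\{H>s\}}e^{-\frac{1}{n+1}\int_0^{s\wedge H}\eps_u\,du}\big)$, which is bounded by $\E(\eps_s\bi_{\{H>s\}})=\gamma_s$ and tends to $\gamma_s$ by dominated convergence; similarly $(n+1)(1-\varphi_{t,n})\to\E\int_0^{t\wedge H}\eps_u\,du=\int_0^t\gamma_v\,dv$. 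The step that closes the equation is to write $s_{u,n}^n=S_{u,n}\big/\big((1-p)f_u\,s_{u,n}\big)$, so that along the subsequence $s_{u,n_k}^{n_k}\to S_u/((1-p)f_u)$ pointwise while staying bounded by $1$; dominated convergence in the convolution, with integrable majorant $\gamma_{t-u}$, then yields
\[
g_{t,n_k}\longrightarrow\int_0^t\gamma_v\,dv-\int_0^t S_u\,\gamma_{t-u}\,du=\int_0^t(1-S_u)\gamma_{t-u}\,du,
\]
using $\int_0^t\gamma_v\,dv=\int_0^t\gamma_{t-u}\,du$. Inserting this into the logarithmic identity gives~\eqref{master0}.

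I expect the main obstacle to be choosing the correct rescaling and justifying the passage to the limit inside the convolution: one must notice that, although $s_{t,n}\to1$, the power $s_{u,n}^{n}$ does \emph{not} tend to $1$ but to $S_u/((1-p)f_u)$ — which is exactly what makes the limiting equation self-consistent — while the rescaled defect $(n+1)(1-s_{t,n})$ has a finite limit and $(n+1)(1-s_{t,n})^2\to 0$ thanks to the uniform $O(1/n)$ bound. The remaining ingredients (differentiation under the expectation, the majorant $\gamma_{t-u}$, the substitution $v=t-u$, and the error estimate for $\log(1-x)$) are routine; one needs only $p<1$ and local integrability of $\gamma$, i.e. $\E\int_0^{t\wedge H}\eps_u\,du<\infty$, the latter also guaranteeing via Jensen's inequality that $S_{t,n}\ge(1-p)f_t\,e^{-\E\int_0^{t\wedge H}\eps_u\,du}>0$, so that all the logarithms are legitimate.
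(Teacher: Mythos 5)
Your proposal is correct and follows essentially the same route as the paper's proof: feed the rescaled rate into Theorem~\ref{T1}, expand $s_{t,n}$ to first order in $1/(n+1)$, identify $\lim (n+1)(1-\varphi_{t,n})=\int_0^t\gamma_v\,dv$ and $\lim (n+1)\varphi'_{t,n}=-\gamma_t$, and re-express $s_{u,n}^n$ through $S_{u,n}/((1-p)f_u)$ before passing to the limit in the convolution. The only difference is that you supply the a priori bounds and dominated-convergence arguments that the paper replaces with informal $\sim$ asymptotics, which is a welcome tightening rather than a different method.
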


\begin{proof}
Note first, that  the corresponding $\varphi$-function (see equation~\eqref{phi-ran-f}) is given by 
$$ \varphi_{t,n}=
\begin{cases}
1,& t<0,\\
\E\left(e^{-\frac{1}{n+1}\int_0^{t\wedge H}\eps_udu}\right), & t\geq 0.
\end{cases}
$$
By Theorem~\ref{T1},
\begin{equation}
\label{S}
S_{t,n}=S_{0,n}f_{t}[s_{t,n}]^{n+1}\quad\text{for}\quad t\geq 0,
\end{equation}
where  the function  
 $s_{t,n}$ satisfies the   equation
\begin{equation}
\label{int_eq2}
s_{t,n}=\varphi_{t,n}-S_{0,n}\int _0^{t}f_u s_{u,n}^n\varphi'_{t-u,n}du
\quad\text{with}\quad S_{0,n}=1-p=S_0.
\end{equation}
By~\eqref{S}-\eqref{int_eq2},
$$s_{t,n}=\left(\frac{S_{t,n}}{S_{0}f_{t}}\right)^{\frac{1}{n+1}}=1+
\frac{1}{n+1}\left(\log\left(\frac{S_{t,n}}{S_{0}}\right)-\log(f_{t})\right)+o\left(\frac{1}{n}\right),$$
so that 
\begin{equation}
\label{int_eq3}
\log\left(\frac{S_{t,n}}{S_{0}}\right)-\log(f_{t})
=(n+1)(\varphi_{t,n}-1)-
\int\limits_{0}^t
\left(S_{u,n}\right)^{\frac{n}{n+1}} (S_{0}f_u)^{\frac{1}{n+1}}[(n+1)\varphi'_{t-u,n}]du.
\end{equation}
A direct computation (we skip detailes) gives that 
\begin{align}
\label{gamma1}
(n+1)(\varphi_{t,n}-1)
&=-\E\left(\int\limits_0^{t\wedge H}\eps_udu\right)+o(1)=\int_0^t\gamma_udu+
o\left(1\right)\\
(n+1)\varphi'_{t,n}&=-\gamma_t e^{-\frac{1}{n+1}\int_0^t\eps_udu}=
-\gamma_t(1+o\left(1\right))
\label{gamma2}
\end{align}
for any fixed $t\geq 0$, 
where the function $\gamma_t$ is defined in~\eqref{gamma}.
In addition, note  that $\left(S_{0}f_u\right)^{\frac{1}{n+1}}\to 1$
and $\left(S_{u,n}\right)^{\frac{n}{n+1}}\to S_{u,n}$, as $n\to \infty$.
Combining this with~\eqref{gamma1}-\eqref{gamma2}
allows to  rewrite~\eqref{int_eq3} as follows
\begin{equation}
\label{int_eq31}
\begin{split}
\log\left(\frac{S_{t,n}}{S_{0}}\right)-\log(f_{t})&= -\int_0^t\gamma_udu+
\int\limits_{0}^tS_{u,n}\gamma_{t-u}du+o\left(1\right)\\
&=
-\int\limits_{0}^t(1-S_{u,n})\gamma_{t-u}du +o\left(1\right).
\end{split}
\end{equation}
Since
$\log(f_t)=-\int_0^t\lambda_udu$, 
we obtain that 
\begin{equation}
\label{int_eq32}
\log\left(\frac{S_{t,n}}{S_{0}}\right)=-\int_0^t\lambda_udu
-\int\limits_{0}^t (1-S_{u,n})\gamma_{t-u}du+o\left(1\right),
\end{equation}
which  implies, by the dominated convergence theorem,
 equation~\eqref{master0} for any pointwise 
limit point $(S_t,\, t\geq 0)$ for the sequence of functions $(S_{t,n},\, t\geq 0)$,\, $n\geq 1$, as claimed.
\end{proof}
Differentiating~\eqref{master0}  gives  the master equation in the differential form 
\begin{equation}
\label{master-initial}
\frac{S_t'}{S_t}=-\lambda_{t}-(1-S_t)\gamma_{0}-\int_0^t(1-S_u)\gamma'_{t-u}du,
\end{equation}
or, equivalently, 
\begin{equation}
\label{master}
\frac{S_t'}{S_t}=-\lambda_{t}-(1-S_0)\gamma_{t}+\int_0^tS'_u\gamma_{t-u}du.
\end{equation}

\begin{remark}
{\rm 
The existence and the uniqueness of solution of equation~\eqref{master0} follows from general results 
for  integral equations with delay (\cite{Burton}).
It can be shown that
the sequence of functions $(S_{t,n},\, t\geq 0)$,\, $n\geq 1$, is equicontinuous. Therefore, there exists 
a subsequence that uniformly  converges  to the solution of~\eqref{master0}. We skip the
 technical details.
}
\end{remark}

\begin{remark}
\label{RemStability}
{\rm 
It follows from equation~\ref{master0} that stationary 
 value $S_\infty$ satisfies the following equation
\begin{equation}
\label{S_inf}
\log\left(\frac{S_{\infty}}{S_0}\right)= -\int_0^\infty\lambda_u du- (1-S_\infty) \int_0^\infty\gamma_{u}du.
\end{equation}
}
\end{remark}

\begin{remark}
{\rm 
Note that equation~\eqref{master0} (or its differential equivalent~\eqref{master}) 
 is a standalone equation for 
the  susceptible population  $S_t$, namely 
that this equation  does not involve  neither the infected, nor the recovered 
populations. 
}
\end{remark}

\begin{remark}
\label{rem-on-gamma(t)}
{\rm 
It should be noted that   all the  information concerning the infection rates and recovery times 
of the original discrete SIR model is included in~\eqref{master} via  the   function
$\gamma$.
For example, if the recovery time in the discrete SIR model  is given by a deterministic constant $H$, then 
$$
%\begin{equation}
%\label{gammaRecover}
\gamma_{t}=\begin{cases}
\eps_t,& \text{for } t<H,\\
0, &\text{for } t\geq H,
\end{cases}
%\begin{equation}
$$
where $\eps_t$ is the rate of infection in the discrete model. 
In particular, if $H=\infty$, then $\gamma_{t}=\eps_t$.
In general, these two  functions  are different
(see Example~\ref{example-SIR} below).
}
\end{remark}

\subsection{Continuous SIR models implied by the master equation}
\label{special-continuous}

In this section we show that  the master equation~\eqref{master0}
for  the susceptible population implies equations for  
other population compartments (which explains 
the term master equation). 
A SIR model implied by the master equation~\eqref{master0}
depends on the structure and its interpretation  
of the function $\gamma$. To clarify what is meant by "interpretation"
consider the case when $\gamma_t=0$ for all $t>H$ for some $H>0$.
This can be interpreted as an infected individual recovering after time $H$ since the moment 
of being infected (as in  Remark~\ref{rem-on-gamma(t)}). 
On the other hand,  this can be  interpreted,  as if an infected individual 
never recovers, but becomes not contagious to others after time 
$H$ since the moment of being infected. 
In this case one can operate with just two compartments, namely susceptible and infected ones.
Below we consider examples, where this argument is reinforced.
Note that for simplicity of exposition  and without loss of generality 
we assume throughout this section  that
\begin{equation}
\label{lambda=0}
\lambda_{t}\equiv 0,
\end{equation}
 i.e. there is no self-infection.

\subsubsection{The model with no recovery}
\label{no-recovery}

The basic continuous SIR model implied by the master 
equation is a two-compartmental model, in which 
the population is divided into two compartments,  namely,
the compartment of susceptible individuals, described 
by the variable $S_t$, and the compartment of infected ones, described by
the variable $I_t$, so that
\begin{equation}
\label{balance}
1=S_t+I_t\quad\text{and}\quad\text{for}\quad t\geq 0.
\end{equation}
Then  $S'_t=-I'_t$, which allows to rewrite 
equation~\eqref{master} as follows 
\begin{equation}
\label{equations-no-recovery}
%\begin{split}
S'_t=-S_t\left(I_0\gamma_{t}
+\int_0^tI'_u\gamma_{t-u}du\right).
%\quad\text{and}\quad
%I'_t=-S'_t.
%\end{split}
\end{equation}
Equation~\eqref{equations-no-recovery} describes the model, in which 
 an individual  infected at time $u\geq 0$ infects 
any susceptible individual with the rate $\gamma_{t-u}$ at time $t>u$.
This model can be interpreted as the model without recovery.

\begin{example}[The model with latent period]
\label{latent}
{\rm 
Suppose that an infected individual is  latent 
for a non-random period of time of length $L>0$.
In addition, suppose  that the rate of infection is constant.  
Then
 $\gamma_t=\eps{\bf 1}_{\{t\geq L\}}$, where  $\eps$ is the rate of infection,
and equation~\eqref{equations-no-recovery} becomes 
as follows
$$
S_t'%=-\eps S_t(1-S_{t-L})
=-\eps S_tI_{t-L}\quad\text{and}\quad I_t'=-S'_{t}.
$$
}
\end{example}

\subsubsection{Model with a constant rate of infection  and random recovery}
\label{constant-eps}
Suppose that the function  $\gamma_t$ is 
of  the following form
\begin{equation}
\label{gamma1}
\gamma_{t}=\eps\beta_{t}\quad\text{for}\quad t\geq 0,
\end{equation}
where  $\eps>0$ is a given constant and 
$\beta_{t}$ is a non-increasing positive function, such that $\beta_{0}=1$ and 
 $\beta_t\to 0$, as $t\to \infty$.
Then, the master equation implies the continuous SIR  model with the three standard
compartments, in which 
an infected  individual
 recovers  in a  time given by  random variable $\xi$ with the tail distribution
$\P(\xi>t)=\beta_{t}$,   and during  its infectious 
period it  infects any susceptible one with the constant rate $\eps$. 
Indeed, under these assumptions, equation~\eqref{master}
is as follows (recall that~\eqref{lambda=0})
\begin{equation}
\label{master-beta}
S_t'=-\eps S_t\left(I_0\beta_{t}-\int\limits_0^tS'_u\beta_{t-u}du\right).
\end{equation}
Define 
\begin{align}
\label{I_t}
I_t&=I_0\beta_{t}-\int\limits_0^tS'_u\beta_{t-u}du\quad\text{for}\quad t>0\quad\text{and}\quad
 I_0=1-S_0
\end{align}
and
\begin{align}
\label{R}
R_t&=I_0(1-\beta_{t})-\int\limits_0^tS'_u(1-\beta_{t-u})du
\quad\text{for}\quad t>0\quad\text{and}\quad
 R_0=0.
\end{align}
It is easy to see that 
\begin{equation}
\label{balance2}
1=S_t+I_t+R_t\quad\text{for}\quad t\geq 0.
\end{equation}
Moreover, one can show  that both $I_t\geq 0$ and $R_t\geq 0$ (we skip the details).
Therefore, variables $I_t$ and $R_t$ can be interpreted 
as the  population proportions of infected and recovered 
individuals respectively in the continuous SIR model with the constant rate of infection $\eps$
and the random recovery time with the tail distribution given by the function $\beta$.
Indeed, in this model the infected compartment at time $t$  consists of 
1) those who were  infected at time $0$ and did  not recover before time $t$
(which gives the first term 
$I_0\beta_{t}$ in~\eqref{I_t}), and 
2) those, who were  infected at time $u\in (0, t]$ and did not recover before
 time $t$ (integrating over time gives  the integral term in~\eqref{I_t}).
A similar argument gives~\eqref{R1}, which also follows from~\eqref{I_t} and~\eqref{balance2}
combined with the initial condition $S_0+I_0=1$.
Differentiating both~\eqref{I_t} and~\eqref{R}, and combining them with~\eqref{master-beta},
 we get   the following  system of equations 
\begin{align}
\label{S-eq}
S'_t&=-\eps S_t I_t,\\
\label{I-eq}
I'_t&=\eps S_tI_t+I_0\beta'_{t}+\eps\int\limits_0^tS_uI_u\beta'_{t-u}du,\\
\label{R-eq}
R'_t&=-I_0\beta'_{t}-\eps\int\limits_0^tS_u I_u\beta'_{t-u}du.
\end{align}

\begin{remark}
\label{Anna}
{\rm 
The system of equations~\eqref{S-eq}-\eqref{R-eq} is similar to the   
system of equations  of the delay model proposed 
 in~\cite{DellAnna} for modeling the spread of Covid-19 in Italy.
}
\end{remark}

\begin{example}
[The classic SIR model]
\label{example-SIR}
{\rm 
Consider the discrete SIR model on a homogeneous tree with the vertex degree $n+1$. 
Assume that the infection rate is constant, i.e.  $\eps_t\equiv \eps$ for some constant $\eps>0$, and that 
the recovery time $H$ is exponentially distributed with parameter $\mu$, i.e. $\P(H>t)=e^{-\mu t}$ for $t\geq 0$.
In addition, assume that there is no self-infection, i.e. 
 $\lambda_{t}=0$ for $t\geq 0$.
Then 
$$\E\left(\int_0^{t\wedge H}\eps_udu\right)=\eps\E(\min(t, H))=\frac{\eps}{\mu}\left(1-e^{-\mu t}\right),$$
so that 
$\gamma_{t}=\eps e^{-\mu t}$ for $t\geq 0$. 
Setting  
$\beta_{t}=\frac{\gamma_{t}}{\eps}=e^{-\mu t}$ gives a special case of~\eqref{gamma1}.
The system of equations~\eqref{S-eq}-\eqref{R-eq} becomes 
as follows
\begin{align}
\label{S-SIR}
S_t'&=-\eps S_t I_t,\\
\label{I-SIR}
I_t'&=\eps S_t I_t-\mu  I_t,\\
\label{R-SIR}
R_t'&=\mu  I_t,
\end{align}
which is the system of equations of the  classic Kermack-McKendrick  model (with the infection rate $\eps$ and the recovery rate $\mu$).
}
\end{example}

\begin{remark}
\label{remark-SIR}
{\rm 
Note that  in   Example~\ref{example-SIR}
it  is probably more convenient  to start  with 
equation~\eqref{master0}, which in this case is   as follows
\begin{equation}
\label{int_exp1}
\log\left(\frac{S_t}{S_0}\right)=-\eps\int_0^t(1-S_u)e^{-\mu(t-u)}du.
\end{equation}
Then, differentiating~\eqref{int_exp1} gives  that
\begin{equation}
\label{basic1}
S_t'=-\eps S_t\left(1-S_t-\mu\int_0^t(1-S_u)e^{-\mu(t-u)}du\right).
\end{equation}
Combining~\eqref{basic1} with~\eqref{int_exp1} we obtain 
the following  equation 
\begin{equation}
\label{basic2}
S_t'=-\eps S_t\left(1-S_t+\frac{\mu}{\eps}\log\left(\frac{S_t}{S_0}\right)\right),
%=-\eps S_tI_t,
\end{equation}
which is the master equation (in the differential form) 
 corresponding to the Kermack-McKendrick model.
Setting $I_t=1-S_t+\frac{\mu }{\eps}\log\left(\frac{S_t}{S_0}\right)$, one can proceed as 
in Example~\ref{example-SIR} to get the equations~\eqref{S-SIR}-\eqref{R-SIR}.
}
\end{remark}

\begin{remark}
\label{remark-Harko}
{\rm 
It should be noted that the master  equation~\eqref{basic2} is well-known. For example, it is the same as 
 equation (22) in \cite{Kroger} and  is also equivalent to equation (26) in~\cite{Harko}.
}
\end{remark}

\begin{remark}
{\rm 
Note that  equating the time derivative to zero in equation~\eqref{basic2}, i.e. 
$S'_t=0$, gives the known equation  
\begin{equation}
\label{stat}
1-S+\frac{\mu }{\eps} \log\left(\frac{S}{S_0}\right)=0
\end{equation}
for  the stationary  population proportion of susceptible individuals $S$ in the SIR model
(e.g. see equation (7) in~\cite{Barlow} and references therein).
In particular, this  equation  shows that  the stationary value $S$ depends only on the ratio 
 $\displaystyle{\mu/\eps=1/R_0}$, where $R_0$ is the basic reproduction number in the classic SIR model.
Note also that equation~\eqref{stat} is just a special case of more general equation~\eqref{S_inf} 
for the stationary susceptible state.
}
\end{remark}

\begin{example}[Constant rate of infection and deterministic recovery]
{\rm 
Consider a model, in which 
the infection rate is given by a constant $\eps>0$, and the recovery time is given 
by a deterministic constant $H>0$.
This model can be obtained by setting 
$\gamma_t=\eps$ for $t\in [0,H]$ and $\gamma_t=0$ for $t>H$.
This gives the  following model 
equations
\begin{align*}
S_t'&=-\eps S_tI_t,\\
I_t'&=\eps S_tI_t-\eps S_{t-H}I_{t-H},\\
R_t'&=\eps S_{t-H}I_{t-H},
\end{align*}
where $S_t=I_t=0$ for $t<0$. 
}
\end{example}

\subsubsection{The general case: time-varying infection rate and random recovery}
\label{general-SIR}

In this section we generalize SIR 
models considered in Sections~\ref{no-recovery} and~\ref{constant-eps}.

Suppose that the function $\gamma$ is of the following form 
\begin{equation}
\label{gamma-general}
\gamma_{t}=w_t\beta_{t}\quad\text{for}\quad t\geq 0,
\end{equation}
where $(w_t,\, t\geq 0)$ is a non-negative function 
and the function $(\beta_t,\, t\geq 0)$ is the tail distribution 
of some positive random variable (i.e. similarly to what  we assumed 
 in Section~\ref{constant-eps}).
Arguing as in Section~\ref{constant-eps}, we obtain 
the  continuous SIR model
described by the following equations
\begin{align}
\label{master03}
\log\left(\frac{S_{t}}{S_0}\right)&=-\int_0^t(1-S_u)\gamma_{t-u}du,\\
\label{I_t-30}
I_t&=I_0\beta_{t}-\int\limits_0^tS'_u\beta_{t-u}du,\\
%=\beta_t-S_t-\int_0^tS_u\beta'_{t-u}du.
\label{R1}
R_t&=I_0(1-\beta_{t})-\int\limits_0^tS'_u(1-\beta_{t-u})du,
\end{align}
where,  $S_t$, $I_t$ and $R_t$ are population proportions 
of susceptible, infected and recovered individuals respectively, so that 
$1=S_t+I_t+R_t$ for $t\geq 0$.  As before,  we assumed that $R_0=0$.
In this model  an infected individual  recovers 
in a random time given by a random variable $\xi$ with the tail distribution 
$\P(\xi>t)=\beta_{t}$  and, if it is infected at time $u$, then it  
 infects any susceptible one with the rate $w_t$ at the time $u+t$.
Recall, that we also assume~\eqref{lambda=0}.

In the differential form the model equations are as follows
\begin{align}
\label{master3}
S_t'&=S_t\left(-I_0\gamma_{t}+\int_0^tS'_u\gamma_{t-u}du\right),\\
\label{I3}
I_t'&=-S_t'+I_0\beta'_{t}-\int_0^tS'_{u}\beta'_{t-u}du,\\
\label{R3}
R_t'&=-I_0\beta'_{t}+\int_0^tS'_{u}\beta'_{t-u}du.
\end{align}

\begin{remark}
{\rm 
By choosing appropriate functions  $(w_t,\, t\geq 0)$ and $(\beta_t,\, t\geq 0)$ one can model various 
infection rates and recovery distributions.
For example, using the power law functions allows 
to  model memory  effects observed in real data (e.g. see~\cite{Angstmann}
%,~\cite{memory-effect} 
and references therein). 
}
\end{remark}

In the rest of this section we use the idea from~\cite{Angstmann} in order 
to rewrite equations~\eqref{master3}-\eqref{R3}
in terms of a certain kernel.  The idea is based on the fact  
that these equations contain convolutions, which makes  it possible to apply the Laplace transform.

Let  $(\L\{g\}_t,\, t\in \R_{+})$ be the Laplace transform of a function $(g_t,\, t\in \R_{+})$.
It follows from~\eqref{I3} that
$$\L\{I\}_t=I_0\L\{\beta\}_t-\L\{S'\}_t\L\{\beta\}_t,$$
and, hence, 
$$\L\{S'\}_t=I_0-\frac{\L\{I\}_t}{\L\{\beta\}_t}.$$
Thus, for any  appropriate function $(g_t,\,t\in\R_{+})$ we have that 
\begin{equation}
\label{fact0}
\begin{split}
\int_0^tS'_{u}g_{t-u}du&=\L^{-1}\left[\L\{S'\}_t\L\{g\}_t\right]\\
&=
\L^{-1}\left[\left(I_0-\frac{\L\{I\}_t}{\L\{\beta\}_t}\right)\L\{g\}_t\right]
=I_0g_t -\L^{-1}\left(\L\{I\}_t\frac{\L\{g\}_t}{\L\{\beta\}_t}\right).
\end{split}
\end{equation}
Since $\L^{-1}(\L\{a\}\L\{b\})$ is equal to the convolution $a\ast b$, we can rewrite~\eqref{fact0} as follows
\begin{equation}
\label{fact}
\begin{split}
\int_0^tS'_{u}g_{t-u}du&=I_0g_t-\int_0^tI_{u}\K(g)_{t-u}du,
\end{split}
\end{equation}
where $\K$ is a  kernel  defined by
\begin{equation}
\label{K}
\mathcal{K}(g)_t:=\L^{-1}\left(\frac{\L\{g\}_t}{\L\{\beta\}_t}\right).
\end{equation}
Finally, using~\eqref{fact} with $g_t=-\gamma_t$ in~\eqref{master3},  
and with $g_t=-\beta'_t$ in~\eqref{I3} and~\eqref{R3} 
gives the system of the model equations in the kernel form
\begin{align}
\label{master31}
S_t'&=-S_t\int_0^tI_{u}\K(\gamma)_{t-u}du,\\
\label{I31}
I_t'&=S_t\int_0^tI_{u}\K(\gamma)_{t-u}du-\int_0^tI_{u}\K(\beta')_{t-u}du,\\
\label{R31}
R_t'&=\int_0^tI_{u}\K(\beta')_{t-u}du.
\end{align}

\begin{remark}
{\rm 
It should be noted  that  equation~\eqref{K} 
is an analogue of  equation (16) in~\cite{Angstmann}. 
}
\end{remark}

\section{Remark on fractional SIR models}
\label{fractional}

One of the recognized drawbacks  of the classic SIR model is that 
  both the infection rate and the recovery rate do  not depend on the state of the system, i.e. 
the model is memoryless.
A popular approach to modeling memory effects consists in using fractional SIR models
(e.g., see ~\cite{Chen} and references therein). 
Some of these  models are obtained 
by formal replacement of ordinary derivatives by fractional derivatives of a certain type.
This  gives a system of fractional differential equations
that is equivalent to  a system of  integro-differential equations 
with a power-law kernel.
For example, replacing ordinary derivatives in the classic SIR model by Caputo  fractional derivatives
gives a system of  fractional differential equations, which are 
 equivalent to the following system of integro-differential equations 
\begin{align}
\label{formal}
S'_t&=-\eps \int _0^tI_{u}S_{u}K_{t-u}du,\\
\label{I-frac}
I'_t&=\int _0^t\left(\eps  I_{u}S_{u}-\mu I_{u}\right)K_{t-u}du,\\
\label{R-frac}
R'_t&=\mu  \int _0^tI_{u}K_{t-u}du,
\end{align}
with the kernel $K_y=\frac{y^{\alpha -2}}{\Gamma (\alpha -1)}$, where 
$\alpha\in (0,1]$ and  
 $\Gamma$ is the Gamma-function. 
However,  it is not quite clear  what physical/biological process 
%we are  not quite clear  what physical/biological process 
is described by equations~\eqref{formal}-\eqref{R-frac}.
In contrast, equations~\eqref{master3}-\eqref{R3} and their equivalents
in the kernel form, i.e. equations~\eqref{master31}-\eqref{R31}, 
can be naturally interpreted in terms of the interaction between compartments.
Indeed, using the terminology of~\cite{Angstmann}, one can say that, 
for example, 
the flux  into the infected  compartment is equal to the  flux out of the 
susceptible compartment (in the absence of any external factors and self-infection). 
The susceptible compartment decreases at the rate 
proportional to its current value $S_t$. 
The  value $\K(\gamma)_{t-u}$ (in~\eqref{master31}-\eqref{R31})
  describes the impact made on the  susceptible compartment at time $t$
by those individuals, who were infected earlier and is still infectious.
The coefficient of proportionality, i.e.  the  integral term 
$\int_0^tI_{u}\K(\gamma)_{t-u}du$,  measures
the total impact of the infected compartment on the susceptible one over the time period 
$[0,t]$. This generalizes the interaction between  susceptible and  infected 
compartments in the classic SIR model, where only the current value $I_t$ is 
taken into account.

It should be also noted that the continuous SIR model in the present paper 
is obtained by passing to the limit in the discrete stochastic SIR model. 
This is in line with SIR models in the kernel form that are derived from stochastic processes based on 
natural biological assumptions  (e.g., see~\cite{Angstmann},~\cite{DellAnna} and references therein).

\section{Appendix. Special  cases of the discrete SIR model}
\label{examples-SIR}

In this  section, we consider some special cases of the discrete SIR model
on the homogeneous  tree with the vertex degree $n+1$. 
In these cases the integral equation~\eqref{int_eq}
 can be rewritten in an equivalent differential form, which is of interest on its own right.

For simplicity of notations we assume that all vertices are  initially  susceptible
(i.e. $p=0$ in Theorem~\ref{T1}).
\begin{corollary}
\label{C1}
Suppose that there is no recovery, i.e. $H=\infty$,  
 $\eps_t=\eps\bi_{\{t\geq 0\}}$ and $\lambda_{t}=\lambda\bi_{\{t\geq 0\}}$, where $\eps>0$ and 
$\lambda>0$ are given constants.
Then 
\begin{equation}
\begin{split}
\label{exact}
s_{t}&=e^{-\frac{2\eps}{\lambda}\left(e^{-\lambda t}-1+\lambda t\right)},
\quad\text{if}\quad n=1;\\
s_{t}&=\left(\frac{\eps(n-1)+\lambda}{\eps(n-1)
e^{-\lambda t}+\lambda e^{\eps(n-1)t}}\right)^{\frac{1}{n-1}},\quad\text{if}\quad n\geq 2,
\end{split}
\end{equation}
so that 
\begin{align}
\label{n1}
\P(\tau>t)&=e^{-\lambda t}e^{-\frac{2\eps}{\lambda}\left(e^{-\lambda t}-1+\lambda t\right)},
\quad\text{if}\quad n=1;\\
\P(\tau>t)&
=e^{-\lambda t}\left(\frac{\eps(n-1)+\lambda}{\eps(n-1)e^{-\lambda t}+\lambda e^{\eps(n-1)t}}\right)^{\frac{n+1}{n-1}},\quad\text{if}\quad n\geq 2.\label{nn}
\end{align}
\end{corollary}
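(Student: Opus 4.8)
The plan is to reduce the integral equation~\eqref{int_eq} to a first-order ordinary differential equation of Bernoulli type and then integrate it in closed form. Since $p=0$ and $H=\infty$, Proposition~\ref{prop1} gives $\varphi_t=e^{-\eps t}$ and $f_t=e^{-\lambda t}$ for $t\geq 0$, hence $\varphi'_{t-u}=-\eps e^{-\eps(t-u)}$ for $0\leq u<t$. Substituting these into~\eqref{int_eq} and pulling the factor $e^{-\eps t}$ out of the integral, I would first rewrite the equation as
\begin{equation*}
s_t=e^{-\eps t}+\eps e^{-\eps t}\int_0^t e^{(\eps-\lambda)u}s^n(u)\,du,\qquad t\geq 0,
\end{equation*}
which also shows $s_0=1$. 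Differentiating in $t$: the prefactor $e^{-\eps t}$ contributes $-\eps s_t$ to the derivative of the right-hand side, while differentiating the upper limit of the integral contributes $\eps e^{-\lambda t}s^n_t$. Hence $s_t$ solves the Bernoulli equation $s'_t=-\eps s_t+\eps e^{-\lambda t}s^n_t$ with $s_0=1$.

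It then remains to solve this scalar ODE in the two regimes. For $n=1$ the equation is linear, $(\log s_t)'=\eps(e^{-\lambda t}-1)$, and integrating from $0$ to $t$ with $s_0=1$ yields the closed form of $s_t$ in the first line of~\eqref{exact}. For $n\geq 2$ I would apply the standard Bernoulli substitution $v_t=s_t^{1-n}$, which linearises the equation to $v'_t=(n-1)\eps\,v_t-(n-1)\eps e^{-\lambda t}$ with $v_0=1$; solving this with the integrating factor $e^{-(n-1)\eps t}$ and simplifying gives $s_t^{1-n}=v_t=\bigl((n-1)\eps e^{-\lambda t}+\lambda e^{(n-1)\eps t}\bigr)/\bigl((n-1)\eps+\lambda\bigr)$, which rearranges to the second line of~\eqref{exact}. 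Finally, the formulas~\eqref{n1}--\eqref{nn} for $\P(\tau>t)$ follow immediately by inserting these expressions for $s_t$ into $\P(\tau>t)=f_ts_t^{n+1}=e^{-\lambda t}s_t^{n+1}$, which is~\eqref{eqt1} with $p=0$.

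The computations are elementary and I do not expect a genuine obstacle. The only points that require care are the differentiation step, where one must track the two distinct sources of $t$-dependence (the prefactor and the variable upper limit) so that the linear term $-\eps s_t$ appears correctly, and the fact that the Bernoulli substitution $v=s^{1-n}$ degenerates at $n=1$, so that value must be handled separately. One should also verify that the normalisation $s_0=\varphi_0=1$ is respected at each step.
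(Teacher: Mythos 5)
Your proposal is correct and follows essentially the same route as the paper: reduce \eqref{int_eq} to the Bernoulli equation \eqref{ber1} and then integrate it (the paper stops at ``it is easy to verify'', whereas you actually carry out the integration, including the substitution $v=s^{1-n}$ for $n\geq 2$). One caveat: for $n=1$ your own computation gives $s_t=e^{-\frac{\eps}{\lambda}\left(e^{-\lambda t}-1+\lambda t\right)}$, i.e.\ \emph{without} the factor $2$; this is the value consistent with \eqref{n1} via $\P(\tau>t)=e^{-\lambda t}s_t^{2}$, so the first line of \eqref{exact} as printed (with $2\eps/\lambda$) appears to be a typo, and your assertion that the integration ``yields the closed form of $s_t$ in the first line of \eqref{exact}'' should be adjusted accordingly.
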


\begin{proof}[Proof of Corollary~\ref{C1}]

Since $\eps_t=\eps\bi_{\{t\geq 0\}}$ and 
$\lambda_{t}=\lambda\bi_{\{t\geq 0\}}$, we have  that 
\begin{equation}
%\begin{split}
\label{C1P1}
\varphi_{t}=
\begin{cases}
1, & t<0,\\
e^{-\eps t},& t\geq 0,
\end{cases}
\quad\text{and}\quad
f_{t}=\begin{cases}
1, & t<0,\\
e^{-\lambda t},& t\geq 0,
\end{cases}
\end{equation}
and  
 $\varphi'_t=-\eps e^{-\eps t}$ for $t\geq 0$. The 
 integral equation~\eqref{int_eq} in this case is as follows
\begin{equation}
\label{int_eq0}
s_{t}=e^{-\eps t}+\eps \int _0^te^{-\lambda u}s^n_ue^{-\eps(t-u)}du.
\end{equation}
Differentiating~\eqref{int_eq0} and simplifying gives the following differential equation
\begin{equation}
\label{ber1}
\begin{split}
s'_t
=-\eps s_{t}+\eps e^{-\lambda t}s^n_t.
\end{split}
\end{equation}
It is easy to verify  that the function defined in~\eqref{exact} is a solution of  equation~\ref{ber1}.
\end{proof}

\begin{remark}
{\rm 
The equation~\eqref{ber1} is  the well known   Bernoulli equation (e.g., see~\cite{Parker}).
Under assumptions of  Corollary~\ref{C1} the model was originally considered in~\cite{Gairat}.
}
\end{remark}

\begin{remark}
\label{R-logistic}
{\rm 
It follows from equation~\eqref{nn}
that \begin{align*}
\P(\tau>t)&=e^{-\lambda t}\left(\frac{\eps(n-1)+\lambda}{\eps(n-1)e^{-\lambda t}+\lambda e^{\eps(n-1)t}}\right)^{\frac{n+1}{n-1}}\\
&\sim e^{-\lambda t}\left(\frac{\eps(n-1)+\lambda}{\eps(n-1)e^{-\lambda t}+\lambda e^{\eps(n-1)t}}\right)
=
\frac{1+\frac{\lambda}{\eps(n-1)}}{1+\frac{\lambda}{\eps(n-1)}e^{(\eps(n-1)+\lambda)t}}
\end{align*}
for sufficiently large $n$. 
Further, if $\eps=\eps_n$, where 
$\eps_nn\to c>0$, as $n\to\infty$, 
then 
\begin{equation}
\label{Eq-logistic}
1-\P(\tau>t)=\P(\tau\leq t) \to
 \frac{\frac{\lambda}{c} e^{(c+\lambda)t}-\frac{\lambda}{c}}{1+\frac{\lambda}{c} e^{(c+\lambda)t}}
 =\left(1+\frac{\lambda}{c}\right)\frac{1}{1+\frac{c}{\lambda} e^{-(c+\lambda)t}}-\frac{\lambda}{c},
\quad\text{as}\quad n\to \infty.
\end{equation}
 In other words,  the probability $\P(\tau\leq t)$ converges
to a linear transformation of the 
logistic curve $\displaystyle{\frac{1}{1+\frac{c}{\lambda}e^{-(c+\lambda)t}}}$.
 }
\end{remark}

\begin{corollary}
\label{C2}
Suppose that the recovery time is given by a deterministic constant $H>0$,
$\lambda_{t}=\lambda{\bf 1}_{\{t\geq 0\}}$ 
 and  $\eps_{t}=\eps\bi_{\{0\leq t\leq H\}}$, 
 where $\eps>0$ and $\lambda>0$ are given constants.
Then integral equation~\eqref{int_eq} is equivalent to the following differential equation
\begin{equation}
\label{ber2}
\begin{split}
s'_t&=-\eps  s_{t}+\eps e^{-\lambda t}s^n_t\quad\text{for}\quad t\leq H,\\
s'_t&
=-\eps s_{t}+\eps e^{-\lambda t}s^n_t+\eps e^{-\eps H}-\eps e^{-\lambda(t-H)-\eps H}s^n_{t-H}
\quad\text{for}\quad t>H.
\end{split}
\end{equation}
\end{corollary}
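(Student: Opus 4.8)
The plan is to follow the same route used in the proof of Corollary~\ref{C1}: first compute the functions $\varphi$ and $f$ from Proposition~\ref{prop1} under the present hypotheses, then plug them into the integral equation~\eqref{int_eq}, and finally differentiate to pass to the differential form. Since $\lambda_t=\lambda\bi_{\{t\geq 0\}}$, we again have $f_t=e^{-\lambda t}$ for $t\geq 0$. For the $\varphi$-function, the recovery time is the deterministic constant $H$, so $t\wedge H=\min(t,H)$ and $\eps_u=\eps$ on $[0,H]$, giving
\begin{equation*}
\varphi_t=e^{-\eps(t\wedge H)}=
\begin{cases}
e^{-\eps t}, & 0\leq t\leq H,\\
e^{-\eps H}, & t>H.
\end{cases}
\end{equation*}
Its derivative is $\varphi'_t=-\eps e^{-\eps t}$ for $0<t<H$ and $\varphi'_t=0$ for $t>H$ (with the caveat, already implicit in the paper's standing assumption that all derivatives exist, that we ignore the kink at $t=H$).

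Next I would substitute into~\eqref{int_eq} with $p=0$, so that $s_t=\varphi_t-\int_0^t f_u s_u^n\varphi'_{t-u}\,du$. The key point is that $\varphi'_{t-u}$ vanishes once $t-u>H$, i.e. for $u<t-H$. Hence for $t\leq H$ the integral runs effectively over all of $[0,t]$ with $\varphi'_{t-u}=-\eps e^{-\eps(t-u)}$, reproducing exactly equation~\eqref{int_eq0}, and differentiating gives the first line of~\eqref{ber2}, identical to~\eqref{ber1}. For $t>H$ one splits the range of integration at $u=t-H$: on $(t-H,t)$ one has $\varphi'_{t-u}=-\eps e^{-\eps(t-u)}$, while on $(0,t-H)$ the integrand is zero. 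Thus
\begin{equation*}
s_t=e^{-\eps H}+\eps\int_{t-H}^{t}e^{-\lambda u}s_u^n e^{-\eps(t-u)}\,du\qquad(t>H).
\end{equation*}

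Finally I would differentiate this last identity in $t$. The exponential prefactor $e^{-\eps t}$ inside the integral contributes the term $-\eps(s_t-e^{-\eps H})$ after using the integral itself to re-express the convolution, exactly as in the derivation of~\eqref{ber1}; the Leibniz rule applied to the variable limits contributes the upper-limit term $\eps e^{-\lambda t}s_t^n$ (from $u=t$, where $e^{-\eps(t-u)}=1$) and the lower-limit term $-\eps e^{-\lambda(t-H)}s_{t-H}^n e^{-\eps H}$ (from $u=t-H$, where $e^{-\eps(t-u)}=e^{-\eps H}$). Collecting these four contributions yields
\begin{equation*}
s'_t=-\eps s_t+\eps e^{-\lambda t}s_t^n+\eps e^{-\eps H}-\eps e^{-\lambda(t-H)-\eps H}s_{t-H}^n\qquad(t>H),
\end{equation*}
which is the second line of~\eqref{ber2}. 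The converse direction—that a solution of the ODE system solves the integral equation—follows by integrating back and matching at $t=H$, where both forms give $s_H=e^{-\eps H}+\eps\int_0^H e^{-\lambda u}s_u^n e^{-\eps(H-u)}\,du$. The only genuinely delicate point is bookkeeping the split of the convolution at $u=t-H$ and correctly applying the Leibniz rule with both limits moving; everything else is a routine repetition of the $H=\infty$ computation in Corollary~\ref{C1}.
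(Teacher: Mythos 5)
Your proposal is correct and follows essentially the same route as the paper: compute $\varphi_t=e^{-\eps\min(t,H)}$ and $f_t=e^{-\lambda t}$, observe that $\varphi'_{t-u}$ vanishes for $u<t-H$ so the convolution in~\eqref{int_eq} truncates to $[\,t-H,t\,]$ when $t>H$, and then differentiate with the Leibniz rule, using the integral identity itself to replace the remaining convolution by $s_t-\varphi_H$. The paper's proof is the same computation (it additionally does not spell out the converse direction or the matching at $t=H$, which you handle correctly).
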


\begin{proof}[Proof of Corollary~\ref{C2}]
In this case we have that  
$f_{t}=e^{-\lambda t}$ and   $\varphi_{t}=e^{-\eps\min(t, H)}$ for $t\geq 0$,
and equation~\eqref{int_eq} becomes as follows
\begin{equation}
\label{sH}
s_{t}=\begin{cases}
\varphi_{t}+\eps\int_0^te^{-\lambda u}s^n_t\varphi_{t-u}du&\text{for}\quad t<H,\\
\varphi_{t}+\eps \int_{t-H}^te^{-\lambda u}s^n_u\varphi_{t-u}u&\text{for}\quad t\geq H.
\end{cases}
\end{equation}
A direct computation gives that 
$$
s'_t=-\eps\varphi_t+\eps  e^{-\lambda t}s^n_t+\eps ^2\int _{0}^te^{-\lambda u}s^n_u\varphi_{t-u}du
=-\eps s_{t}+\eps e^{-\lambda t}s^n_t\quad\text{for}\quad t<H,
$$
which is the first equation in~\eqref{ber2}. 

Further, if $t>H$, then  $\varphi_{t}=\varphi_H=e^{-\eps H}$, 
and, hence, 
\begin{align*}
s'_t&=\eps e^{-\lambda t}s^n_t-\eps  e^{-\lambda(t-H)-\eps H}s^n_{t-H}
-\eps ^2\int _{t-H}^tf(u)s^n_u\varphi_{t-u}du\\
%&=\eps e^{-\lambda t}s^n_t-\eps  e^{-\lambda(t-H)-\eps H}s^n(t-H)-\eps(s_{t}-\varphi_{t})\\
&=\eps e^{-\lambda t}s^n_t-\eps  e^{-\lambda(t-H)-\eps H}s^n_{t-H}-\eps(s_{t}-\varphi_H)\\
&=-\eps  s_{t}+\eps  e^{-\lambda t}s^n_t+\eps e^{-\eps H}-\eps  e^{-\lambda (t-H)-\eps H}s^n_{t-H},
\end{align*}
and this is  the second  equation in~\eqref{ber2}, as claimed.
\end{proof}

We are going to consider an example of the model  with a random recovery  time.

\begin{corollary}[Exponential recovery time]
\label{C3}
Suppose that the  recovery time is exponentially distributed with the  parameter $\mu$
and functions  $(\eps_{t},\, t\in\R_+)$ and
  $(\lambda_{t},\, t\in\R_+)$ 
are  as  in  Corollaries~\ref{C1} and~\ref{C2}.
Then integral equation~(\ref{int_eq}) is equivalent to the following differential equation 
\begin{equation}
\label{free-term}
s'_t=-(\mu+\eps)s_{t}+\eps e^{-\lambda t}s^n_t+\mu.
\end{equation}
\end{corollary}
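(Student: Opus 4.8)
The plan is to proceed exactly as in the proofs of Corollaries~\ref{C1} and~\ref{C2}: first specialise the functions $\varphi$ and $f$ from Proposition~\ref{prop1} to the present setting, then write out~\eqref{int_eq} explicitly, and finally differentiate, using the convolution structure to collapse the integral back into $s_t$.

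First I would compute $\varphi_t$. By Proposition~\ref{prop1} we have $\varphi_t=\E\bigl(e^{-\eps(t\wedge H)}\bigr)$ for $t\geq 0$, and since $H$ is exponentially distributed with parameter $\mu$, splitting the expectation according to whether $H\leq t$ or $H>t$ gives
\[
\varphi_t=\mu\int_0^t e^{-(\eps+\mu)u}\,du+e^{-(\eps+\mu)t}
=\frac{\mu+\eps\,e^{-(\eps+\mu)t}}{\eps+\mu},
\]
so that $\varphi'_t=-\eps\,e^{-(\eps+\mu)t}$; as in the previous corollaries $f_t=e^{-\lambda t}$. Substituting these into~\eqref{int_eq} with $p=0$ yields
\[
s_t=\varphi_t+\eps\int_0^t e^{-\lambda u}s^n_u\,e^{-(\eps+\mu)(t-u)}\,du
=\varphi_t+\eps\,e^{-(\eps+\mu)t}\int_0^t e^{(\eps+\mu-\lambda)u}s^n_u\,du .
\]

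Next I would differentiate this identity in $t$. Differentiating the exponential prefactor produces the term $-(\eps+\mu)\,\eps\,e^{-(\eps+\mu)t}\int_0^t(\cdots)\,du=-(\eps+\mu)(s_t-\varphi_t)$, while differentiating the upper limit of integration produces $\eps\,e^{-\lambda t}s^n_t$, so
\[
s'_t=\varphi'_t+\eps\,e^{-\lambda t}s^n_t-(\eps+\mu)(s_t-\varphi_t).
\]
The key simplification is the elementary identity $\varphi'_t+(\eps+\mu)\varphi_t=-\eps\,e^{-(\eps+\mu)t}+\mu+\eps\,e^{-(\eps+\mu)t}=\mu$, which turns the right-hand side into $-(\mu+\eps)s_t+\eps\,e^{-\lambda t}s^n_t+\mu$, i.e.\ equation~\eqref{free-term}. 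For the converse direction I would observe that~\eqref{int_eq} forces the initial condition $s_0=\varphi_0=1$, and since the Cauchy problem for~\eqref{free-term} with this initial value has a unique solution, the integral and differential formulations are equivalent.

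I do not expect any genuine obstacle: the argument is a routine specialisation of the method already used for Corollaries~\ref{C1} and~\ref{C2}, and the only steps requiring a little care are the explicit evaluation of $\varphi_t$ for the exponential recovery time and the bookkeeping in differentiating the convolution term.
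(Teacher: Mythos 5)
Your proposal is correct and follows essentially the same route as the paper: compute $\varphi_t=\frac{\mu}{\mu+\eps}+\frac{\eps}{\mu+\eps}e^{-(\mu+\eps)t}$, differentiate the integral equation, and close the computation with the identity $\varphi'_t+(\mu+\eps)\varphi_t=\mu$ (the paper states this same fact in the form $(\mu+\eps-\tilde\eps_t)\varphi_t=\mu$ after introducing $\tilde\eps_t=-(\log\varphi_t)'$). Your version, which exploits directly that $\varphi'_{t-u}=-\eps e^{-(\mu+\eps)(t-u)}$ is a separable exponential kernel, is a slightly cleaner bookkeeping of the same argument, and your closing remark on uniqueness of the Cauchy problem with $s_0=\varphi_0=1$ adequately covers the equivalence claim.
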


\begin{proof}[Proof of Corollary~\ref{C3}]
Start with computing  the corresponding function $\varphi$   
 \begin{equation}
 \label{expon}
\begin{split}
 \varphi_{t}=\E\left(e^{-\eps \min(t, H)}\right)
  &=
\mu\int_0^te^{-\eps  u}e^{-\mu u}du+\mu e^{-\eps  t}\int_t^{\infty } e^{-\mu u}du\\
&
=\frac{\mu}{\mu+\eps}+\frac{\eps}{\mu+\eps}e^{-(\mu+\eps)t}
\quad\text{for}\quad t\geq 0.
\end{split}
\end{equation}
Recall Remark~\ref{Rem1} and define 
\begin{equation}
\label{d=ran}
\tilde{\eps}_{t}:=-\left(\log(\varphi_t)\right)'=
\frac{\eps  (\mu +\eps )}{\mu  e^{(\mu +\eps )t}+\eps }\bi_{\{t\geq 0\}}\quad\text{for}\quad t\geq 0,
\end{equation}
so that 
$\varphi'_t=-\tilde{\eps}_{t}\varphi_{t}$ for $t\geq 0$.
A direct computation gives that 
$$\tilde{\eps}'_t-\tilde{\eps}^2_t
=-(\mu +\eps )\widetilde\gamma_{t}\quad\text{for}\quad t>0.$$
 Using the equation in the  preceding  display and 
differentiating equation~\eqref{int_eq} gives that 
\begin{align*}
s'_t
%&=\varphi'(t)+\left(\int\limits_0^t f(u)s^n(u)\varphi(t-u)\tilde\gamma_{t-u}du\right)'\\
&=-\tilde{\eps}_{t}\varphi (t)+\eps  f_{t}s^n_t+\int _0^{t}f(u)s^n_u\varphi_{t-u}
\left(\tilde\eps_{t-u}'-\eps^2_{t-u}\right)du
\\
&=-\tilde{\eps}_{t}\varphi_t+\eps  f_{t}s^n_t-
(\mu +\eps)\int _0^{t}f_us^n_u\varphi_{t-u}\tilde{\eps}_{t-u}du
\\
&=-\tilde{\eps}_{t}\varphi (t)+\eps  f_{t}s^n_t-(\mu +\eps)(s_{t}-\varphi_t)\\
&=-(\mu +\eps)s_{t}+\eps  f_{t}s^n_t+(\mu +\eps -\tilde{\eps}_{t})\varphi_t.
\end{align*}
Noting  that 
$$
(\mu +\eps -\tilde{\eps}_{t})\varphi_t=\left(\mu +\eps-\frac{\eps(\mu+\eps)}{
\mu e^{(\mu+\eps)t}+\eps}\right)\left(\frac{\mu}{\mu+\eps}+\frac{\eps}{\mu+\eps}e^{-(\mu+\eps)t}\right)=\mu,
$$
gives the equation $s'_t=-(\mu +\eps)s_{t}+\eps  f_{t}s^n_t+\mu$, which is 
the  Bernoulli equation with the additional term $\mu$, 
as claimed.
\end{proof}

\end{document}